%----------------------------------------------
\documentclass[12pt]{amsart}
\usepackage{amsmath, amsthm, amscd, amsfonts}

\setlength{\textwidth}{6.5in}
\setlength{\textheight}{8.7in}
\setlength{\evensidemargin}{-0.2in}
\setlength{\oddsidemargin}{-0.2in}

\newtheorem{theorem}{Theorem}[section]
\newtheorem{lemma}[theorem]{Lemma}

\newtheorem{corollary}[theorem]{Corollary}
\theoremstyle{definition}

\theoremstyle{remark}

\numberwithin{equation}{section}

\newfont{\kh}{msbm10}

%%%%%%%%%%%%%%%%%%%%%%%%%%%%%%%%%%%%%%%%%%%%%%%%%%%%%%%%%%%%%
\begin{document}
\title[The reverse order law for Moore-Penrose inverses]
{The reverse order law for Moore-Penrose inverses of operators on Hilbert C*-modules}

\author{K. Sharifi}
\address{Kamran Sharifi, \newline Department of Mathematics,
Shahrood University of Technology, P. O. Box 3619995161-316,
Shahrood, Iran.} \email{sharifi.kamran@gmail.com}
\author{B. Ahmadi Bonakdar}
\address{Behnaz Ahmadi Bonakdar, \newline Department of Mathematics,
International Campus of Ferdowsi University, Mashhad, Iran.}
\email{b.ahmadibonakdar@gmail.com}

\subjclass[2010]{Primary 47A05; Secondary 46L08, 15A09}
\keywords{Bounded adjointable operator, Hilbert C*-module,
Moore-Penrose inverse, reverse order law}

\begin{abstract}
Suppose
$T$ and $S$ are bounded adjointable operators between Hilbert C*-modules admitting
bounded Moore-Penrose inverse operators.
Some necessary and sufficient conditions are given for the
reverse order law $(TS)^{ \dag} =S^{ \dag} T^{ \dag}$ to hold.
In particular, we show that the equality holds
if and only if $Ran(T^{\ast}TS) \subseteq Ran(S)$ and
$Ran(SS^{\ast}T^{\ast}) \subseteq Ran(T^{\ast}),$ which was studied first by Greville
[{\it SIAM Rev. 8 (1966) 518--521}] for matrices.
\end{abstract}

 \maketitle
%_____________________________________________________________________

\section{Introduction and preliminaries.}
It is well-known that for invertible operators (or nonsingular
matrices) $T, S$ and $TS$, $(TS)^{-1}=S^{-1} T^{-1}$. However,
this so-called reverse order law is not necessarily true for other
kind of generalized inverses. An interesting problem is, for given
operators (or matrices) $TS$ with $TS$ meaningful, then under what
conditions, $(TS)^{ \dag}=S^{ \dag} T^{ \dag}$? The problem first
studied by Greville \cite{GRE} and then reconsidered by Bouldin and
Izumino \cite{BouldinLaw1, IzuminoLaw1}. Many authors discussed
the problem like this, see e.g. \cite{orderlaw0, orderlaw01, orderlaw1, KDC, orderlaw2}
and references therein. An special case, when $S=T^*$, was
given by Moslehian et~al. \cite{SHA/Gram} for a Moore-Penrose
invertible operator $T$ on Hilbert C*-modules. The later paper
and the work of \cite{orderlaw1, GRE} motivate us to study the problem
in the framework of Hilbert C*-modules.

The notion of a Hilbert C*-module is a generalization of the notion
of a Hilbert space. However, some well known properties
of Hilbert spaces like Pythagoras' equality, self-duality, and
even decomposition into orthogonal complements do not hold in the
framework Hilbert modules. The first use of such objects was made by
I. Kaplansky \cite{op5} and then studied more in the work of
W. L. Paschke \cite{op8}. Let us quickly recall the
definition of a Hilbert C*-module.

Suppose that $ \mathcal{A} $ is an
arbitrary C*-algebra and $E$ is a linear space
which is a right $ \mathcal{A}$-module and the scalar multiplication
satisfies $ \lambda (xa)=x(\lambda a)=(\lambda x)a $ for all $ x\in E, ~a\in
\mathcal{A}, \lambda \in \mathbb{C}$. The $ \mathcal{A}$-module $E$ is called a
{\it pre-Hilbert $ \mathcal{A}$-module} if there exists an $\mathcal{A}$-valued
map $ \langle .,.\rangle: E \times E \to \mathcal{A} $ with the
following properties:
\newcounter{cou001}
\begin{list}{(\roman{cou001})}{\usecounter{cou001}}
\item $ \langle x, y+\lambda z\rangle =\langle x,y\rangle +\lambda \langle
 x,z\rangle $;  for all $ x,y,z\in E ,\lambda \in \mathbb{C},$
\item $ \langle x,ya\rangle=\langle x,y \rangle a;$  for all $x,y\in E $ and  $ a\in A$,
\item $ \langle x,y\rangle ^{\ast}=\langle y,x \rangle;$ for all $x,y\in E $,
\item $ \langle x,x\rangle \geq 0  $ and $ \langle x,x\rangle=0 $ if and only if $ x=0.$
\end{list}
The $ \mathcal{A}$-module  $ E $ is called a {\it Hilbert C*-module}
if $E$ is complete with respect to the norm $ \Vert x\Vert =\Vert \langle
 x,x\rangle \Vert ^{1/2}.$ For any pair of Hilbert C*-modules $ E_{1}$ and
$ E_{2}$, we  define
$ E_{1}\oplus E_{2}= \{  (e_1,e_2) | ~ e_{1}\in E_{1} ~{\rm and} ~e_{2}\in E_{2} \}$
which is also a Hilbert C*-module whose $ \mathcal{A} $-valued inner product
is given by $$ \langle (x_1,y_1), (x_2,y_2) \rangle =\langle x_{1},x_{2} \rangle +\langle y_{1},y_{2}
 \rangle, ~~{\rm  for}~ x_1, x_2 \in E_1~ {\rm and} ~y_1, y_2 \in E_2.$$

If $V$ is a (possibly non-closed) $\mathcal A$-submodule
of $E$, then $V^\bot :=\{ y \in E: ~ \langle x,y \rangle=0,~ \ {\rm for} \ {\rm
all}\ x \in V \} $ is a closed $\mathcal A$-submodule of $E$ and
$ \overline{V} \subseteq V^{ \perp \, \perp}$. A Hilbert $\mathcal
A$-submodule $V$ of a Hilbert $\mathcal A$-module $E$ is
orthogonally complemented if $V$ and its orthogonal complement
$V^\bot$ yield $E=V \oplus V^\bot $, in this case, $V$ and its
biorthogonal complement $V^{ \perp \, \perp}$ coincide. For the
basic theory of Hilbert C*-modules we refer to the book by
E.~C.~Lance \cite{LAN}. Note that every Hilbert space is a
Hilbert $ \mathbb{C}$-module and every C*-algebra ${\mathcal A}$
can be regarded as a Hilbert ${\mathcal A}$-module via
$\langle a, b\rangle = a^*b$ when $a, b \in
{\mathcal A}$.

Throughout this paper we assume that $\mathcal{A}$ is an arbitrary
C*-algebra. We use $[ \cdot, \cdot]$ for commutator of two elements.
The notations $Ker(\cdot)$ and $Ran(\cdot)$ stand for kernel and range of operators,
respectively. Suppose $E$ and $F$ are Hilbert $ \mathcal{A}$-modules, $\mathcal{L}(E,F)$ denotes the set
of all bounded adjointable operators from $E$ to $F$, that is, all operator
$T: E \to F$ for which there exists $ T^{\ast}:F \to E$ such that $ \langle Tx,y\rangle=
\langle x , T^{\ast} y \rangle $, for all $ x\in E$ and $ y \in F$.

Closed submodules of Hilbert modules need not to be orthogonally
complemented at all, however we have the following well known
results. Suppose $T$ in $ \mathcal{L}(E,F)$, the operator $T$ has
closed range if and only if $T^*$ has. In this case, $E=Ker(T)
\oplus Ran(T^*)$ and $F=Ker(T^*) \oplus Ran(T)$, cf.
\cite[Theorem 3.2]{LAN}. In view of \cite[Lemma
2.1]{SHA/PARTIAL}, $Ran(T)$ is closed if and only if $Ran(T
\,T^*)$ is, and in this case, $Ran(T)=Ran(T\, T^*)$.

Let $T \in \mathcal{L}(E,F)$. The Moore-–Penrose inverse $T^{ \dag}$ of $T$ (if it exists)
is an element $X \in \mathcal{L}(F,E)$ which satisfies
\begin{enumerate}
  \item $TXT=T$,
  \item $XTX=X$,
  \item $(TX)^*=TX$,
  \item $(XT)^*=XT$.
\end{enumerate}
If $ \theta \subseteq \{1, 2, 3, 4 \}$, and X satisfies the equations $(i)$ for all $i \in \theta$,
then $X$ is an $ \theta$-inverse of $T$. The set of all $ \theta$-inverses of $T$
is denoted by $T\{ \theta \}$. In particular, $T \{ 1, 2, 3, 4 \}=\{ T^{ \dag} \}$. The properties
(1) to (4) imply that $T^{ \dag}$ is unique
and $ T^{ \dag} T$ and $ T \, T^{ \dag} $ are orthogonal
projections. Moreover, $Ran( T^{ \dag} )=Ran( T^{ \dag}  T)$,
$Ran( T )=Ran( T \, T^{ \dag})$,  $Ker(T)=Ker( T^{ \dag} T)$ and
$Ker(T^{ \dag})=Ker( T \, T^{ \dag} )$ which lead us to $ E= Ker(
T^{ \dag} T) \oplus Ran( T^{ \dag} T)= Ker(T) \oplus Ran( T^{
\dag} )$ and $F= Ker(T^{ \dag}) \oplus Ran(T).$ We also have
$Ran(T^{ \dag})=Ran(T^{ *})$ and $Ker(T^ \dag)=Ker(T^*)$.

 Xu and Sheng in \cite{Xu/Sheng} have shown that a bounded
adjointable operator between two Hilbert C*-modules admits a
bounded Moore-Penrose inverse if and only if the operator has
closed range. The reader should be aware of the fact that a
bounded adjointable operator may admit an unbounded operator as
its Moore-Penrose, see \cite{FS2, Gu2, SHA/PARTIAL, SHA/Groetsch} for
more detailed information.

It is a classical result of Greville \cite{GRE}, that
$(TS)^{\dag}= S^{ \dag} T^{ \dag}$ if and only if $T^{ \dag}TSS^*T^*=SS^*T^*$
and $SS^{ \dag}T^*TS=T^*TS$ (or equivalently, $Ran(SS^*T^*) \subseteq Ran(T^*)$ and
$Ran(T^*TS) \subseteq Ran(S)$) for Moore-Penrose invertible matrices $T$ and $S$. The
present paper is an extension of some results of \cite{
orderlaw1, GRE, SHA/Gram} to Hilbert C*-modules settings. Indeed,  we give some
necessary and sufficient conditions for reverse order law for the
Moore-–Penrose inverse by using the matrix form of bounded adjointable module
maps. These enable us to derive  Greville's result for bounded adjointable module
maps.

The matrix form of a bounded adjointable operator $ T \in \mathcal{L}(E,F) $ is induced
by some natural decompositions of Hilbert C*-modules. If
$F=M \oplus M^{\perp} , E=K
\oplus K^{\perp} $ then $T$ can be written as the following $
2\times 2 $ matrix
\begin{equation} \label{lawMatrix}
T= \left[\begin{array}{cc}
T_{1} & T_{2} \\
T_{3} & T_{4} \\
\end{array}\right]
\end{equation}
with operator entries, $ T_{1}\in \mathcal{L}(K,M) , T_{2}\in
\mathcal{L}(K^{\perp},M), T_{3}\in \mathcal{L}(K,M^{\perp}) $
 and $ T_{4}\in \mathcal{L}(K^{\perp},M^{\perp}) $.

\begin{lemma} \label{ahmadi1}
Let $ T \in \mathcal{L}(E,F) $ have a closed range. Then $T$ has
the following matrix decomposition with respect to the orthogonal
decompositions of submodules $ E=Ran(T^{\ast})\oplus Ker(T) $ and $
F=Ran(T)\oplus Ker(T^{\ast})$:
\begin{equation*} \label{law0}
T=\left[\begin{array}{cc}
T_{1} & 0 \\
0 & 0 \\
\end{array}\right]: \left[\begin{array}{c}
Ran(T^{\ast}) \\
Ker(T) \\
\end{array}\right] \to\left[\begin{array}{c}
Ran(T) \\
Ker(T^{\ast}) \\
\end{array}\right],
\end{equation*}
where $ T_{1} $ is invertible. Moreover,
\begin{equation*} \label{law00}
T^{\dagger}=\left[\begin{array}{cc}
T_{1}^{-1} & 0 \\
0 & 0 \\
\end{array}\right]: \left[\begin{array}{c}
Ran(T) \\
Ker(T^{\ast}) \\
\end{array}\right] \to\left[\begin{array}{c}
Ran(T^{\ast}) \\
Ker(T) \\
\end{array}\right].
\end{equation*}
\end{lemma}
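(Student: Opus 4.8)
The plan is to read off the matrix form of $T$ directly from the defining properties of the Moore-Penrose inverse, which the excerpt has already recorded. First I would note that since $T$ has closed range, $T^\dagger$ exists and is bounded by the Xu-Sheng result, and both decompositions $E = Ran(T^*) \oplus Ker(T)$ and $F = Ran(T) \oplus Ker(T^*)$ are genuine orthogonal direct sums (each summand is orthogonally complemented since $T^\dagger T$ and $T T^\dagger$ are orthogonal projections; explicitly $Ran(T^\dagger T) = Ran(T^*)$ with kernel $Ker(T)$, and $Ran(T T^\dagger) = Ran(T)$ with kernel $Ker(T^*)$). With respect to these decompositions write $T$ in the $2\times 2$ block form \eqref{lawMatrix}. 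Because $T$ annihilates $Ker(T)$, the second column is zero; because $Ran(T) = Ran(T) \oplus 0$ inside $F = Ran(T)\oplus Ker(T^*)$, the $(2,1)$ entry (the $Ker(T^*)$-component of $T$ restricted to $Ran(T^*)$) vanishes as well. Hence $T = \mathrm{diag}(T_1, 0)$ with $T_1 = T|_{Ran(T^*)} : Ran(T^*) \to Ran(T)$.

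Next I would verify that $T_1$ is invertible in $\mathcal{L}(Ran(T^*), Ran(T))$. Surjectivity is immediate: $Ran(T_1) = Ran(T) $ since $T(Ker(T)) = 0$. Injectivity: if $x \in Ran(T^*)$ and $T_1 x = 0$ then $x \in Ker(T) \cap Ran(T^*) = \{0\}$ because the decomposition of $E$ is a direct sum. Thus $T_1$ is a bounded adjointable bijection; boundedness of the inverse follows from the open mapping theorem (or, more in keeping with the module setting, by exhibiting the inverse explicitly in the next step).

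Finally I would identify $T^\dagger$. Set $X = \mathrm{diag}(T_1^{-1}, 0): Ran(T)\oplus Ker(T^*) \to Ran(T^*)\oplus Ker(T)$ and check the four Moore-Penrose equations by block multiplication: $TXT = \mathrm{diag}(T_1 T_1^{-1} T_1, 0) = \mathrm{diag}(T_1,0) = T$; $XTX = \mathrm{diag}(T_1^{-1} T_1 T_1^{-1}, 0) = X$; and $TX = \mathrm{diag}(I_{Ran(T)}, 0)$, $XT = \mathrm{diag}(I_{Ran(T^*)}, 0)$ are the orthogonal projections onto $Ran(T)$ and $Ran(T^*)$ respectively, hence self-adjoint. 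By uniqueness of the Moore-Penrose inverse, $X = T^\dagger$, giving the stated block form. The only genuinely delicate point is the invertibility of $T_1$ as an \emph{adjointable} operator with bounded inverse — but this is handled automatically once $X$ above is produced, since $TXT=T$ forces $T_1 (T_1^{-1}) T_1 = T_1$ and the displayed $X$ is bounded and adjointable by construction; alternatively one invokes that a bounded adjointable bijection between Hilbert C*-modules has bounded adjointable inverse.
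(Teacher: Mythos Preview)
Your proposal is correct and follows essentially the same route as the paper: write $T$ as a $2\times 2$ block matrix with respect to the two orthogonal decompositions, kill the off-diagonal blocks using $T(Ker(T))=\{0\}$ and the fact that $T$ lands in $Ran(T)$ (the paper phrases the latter dually via $T^{\ast}(Ker(T^{\ast}))=\{0\}$), observe that the remaining block $T_1$ is a bounded adjointable bijection, and then verify the four Moore--Penrose equations for $\mathrm{diag}(T_1^{-1},0)$. Your write-up is in fact a bit more careful than the paper's about why $T_1$ is invertible and why the decompositions hold, but there is no substantive difference in strategy.
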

\begin{proof}
The operator $T$ and its adjoint $T^*$ have the following representations:
$$T= \left[\begin{array}{cc}
T_{1} & T_{2} \\
T_{3} & T_{4} \\
\end{array}\right]: \left[\begin{array}{c}
Ran(T^*) \\
Ker(T) \\
\end{array}\right] \to\left[\begin{array}{c}
Ran(T) \\
Ker(T^{\ast}) \\
\end{array}\right], $$
$$ T^{\ast}= \left[\begin{array}{cc}
T_{1}^{\ast} & T_{3}^{\ast} \\
T_{2}^{\ast} & T_{4}^{\ast} \\
\end{array}\right]: \left[\begin{array}{c}
Ran(T) \\
Ker(T^*) \\
\end{array}\right] \to~\left[\begin{array}{c}
Ran(T^{\ast}) \\
Ker(T) \\
\end{array}\right].$$
From $ T^{\ast}(Ker(T^{\ast}))=\lbrace 0\rbrace $ we obtain
 $ T^{\ast}_{3}=0 $ and $ T^{\ast}_{4}=0 $, so $ T_{3}=0 $
and $ T_{4}=0 $. Since $T(Ker(T))=\{ 0 \}$, $T_{2}=0 $ and so
$T=\left[ \begin{smallmatrix} T_{1} & 0 \\
0 & 0  \end{smallmatrix} \right]$.

Since $Ran(T)$ is close, $ T_{1} $ possesses a bounded adjointable inverse from
$Ran(T)$ onto $Ran(T^{\ast})$. Now, it is easy to check that the matrix
$\left[ \begin{smallmatrix} T_{1}^{-1} & 0 \\ 0 & 0  \end{smallmatrix} \right]$
is the Moore--Penrose inverse of $T=\left[ \begin{smallmatrix} T_{1} & 0 \\
0 & 0  \end{smallmatrix} \right]$.
\end{proof}

\begin{lemma}\label{ahmadi2}
let $ T\in \mathcal{L}(E,F) $ have a closed range. Let $E_1, E_2$ be closed submodules of
$E$ and $F_1, F_2$ be closed submodules of $F$ such that $ E=E_{1}\oplus E_{2}$ and
$ F=F_{1}\oplus F_{2}$. Then the operator $T$ has
 the following matrix representations with respect to the orthogonal sums
of submodules $E=Ran(T^*) \oplus Ker(T)$ and $F=Ran(T) \oplus Ker(T^*)$:

\begin{equation} \label{law1}
T=\left[\begin{array}{cc}
T_{1} & T_{2} \\
0 & 0 \\
\end{array}\right]~:\left[\begin{array}{c}
E_{1} \\
E_{2} \\
\end{array}\right] ~ \to~ \left[\begin{array}{c}
Ran(T) \\
Ker(T^{\ast}) \\
\end{array}\right],
\end{equation}
where $ D=T_{1}T_{1}^{\ast}+T_{2}T_{2}^{\ast} \in \mathcal{L}(Ran(T))$ is positive
and invertible. Moreover,
\begin{equation} \label{law2} T^{\dagger}=\left[\begin{array}{cc}
T_{1}^{\ast}D^{-1} & 0  \\
T_{2}^{\ast}D^{-1} & 0 \\
\end{array}\right].
\end{equation}

\begin{equation} \label{law3}
 T=\left[\begin{array}{cc}
T_{1} & 0 \\
T_{2} & 0 \\
\end{array}\right] : \left[\begin{array}{c}
Ran(T^{\ast}) \\
Ker(T) \\
\end{array}\right] \to~\left[\begin{array}{c}
F_{1} \\
F_{2} \\
\end{array}\right],
\end{equation}
where $\mathfrak{D}= T_{1}^{\ast}T_{1}+T_{2}^{\ast}T_{2} \in \mathcal{L}(Ran(T^*))$ is positive
and invertible. Moreover,
\begin{equation} \label{law4}
T^{\dagger}= \left[\begin{array}{cc}
\mathfrak{D}^{-1}T_{1}^{\ast} & \mathfrak{D}^{-1}T_{2}^{\ast} \\
0 & 0 \\
\end{array}\right].
\end{equation}
\end{lemma}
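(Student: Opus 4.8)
The plan is to extract the two block forms of $T$ directly from the facts that $Ran(T)$ is an orthogonal direct summand of $F$ and that $T$ annihilates $Ker(T)$, and then to confirm the displayed formulas for $T^{\dagger}$ by checking the four Moore--Penrose equations. The only step that uses more than bookkeeping with $2\times2$ operator matrices is the invertibility of $D$ (and $\mathfrak{D}$), which is where the closed-range hypothesis is spent.

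First I would prove \eqref{law1}. With respect to $E=E_{1}\oplus E_{2}$ and $F=Ran(T)\oplus Ker(T^{\ast})$ — the latter a genuine orthogonal decomposition because $Ran(T)$ is closed — write $T=\left(\begin{smallmatrix}T_{1}&T_{2}\\T_{3}&T_{4}\end{smallmatrix}\right)$. Since $Tx\in Ran(T)$ for every $x\in E$, the $Ker(T^{\ast})$-component of $Tx$ vanishes, so $T_{3}=0$ and $T_{4}=0$; hence $T=\left(\begin{smallmatrix}T_{1}&T_{2}\\0&0\end{smallmatrix}\right)$ with $T_{1}\in\mathcal{L}(E_{1},Ran(T))$ and $T_{2}\in\mathcal{L}(E_{2},Ran(T))$, and $T^{\ast}=\left(\begin{smallmatrix}T_{1}^{\ast}&0\\T_{2}^{\ast}&0\end{smallmatrix}\right)$. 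Multiplying out, $TT^{\ast}=\left(\begin{smallmatrix}T_{1}T_{1}^{\ast}+T_{2}T_{2}^{\ast}&0\\0&0\end{smallmatrix}\right)=\left(\begin{smallmatrix}D&0\\0&0\end{smallmatrix}\right)$, so $Ran(T)$ reduces $TT^{\ast}$ and $D$ is the restriction of the positive operator $TT^{\ast}$ to $Ran(T)$; thus $D\in\mathcal{L}(Ran(T))$ is self-adjoint and positive.

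The key step is that $D$ is invertible. Because $T$ has closed range, $TT^{\ast}$ has closed range and $Ran(TT^{\ast})=Ran(T)$ by \cite[Lemma 2.1]{SHA/PARTIAL}; reading this off the block form $TT^{\ast}=\left(\begin{smallmatrix}D&0\\0&0\end{smallmatrix}\right)$ shows that $D$ maps $Ran(T)$ onto $Ran(T)$. If $Dx=0$ then $TT^{\ast}x=0$, whence $\|T^{\ast}x\|^{2}=\|\langle TT^{\ast}x,x\rangle\|=0$ and $x\in Ker(T^{\ast})\cap Ran(T)=\{0\}$, so $D$ is injective as well. A bijective bounded adjointable operator has a bounded adjointable inverse (by the open mapping theorem together with $(D^{-1})^{\ast}=(D^{\ast})^{-1}$), and $D^{-1}$ is again positive. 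Setting $X=\left(\begin{smallmatrix}T_{1}^{\ast}D^{-1}&0\\T_{2}^{\ast}D^{-1}&0\end{smallmatrix}\right)$, I would then verify by block multiplication that $TX=\left(\begin{smallmatrix}(T_{1}T_{1}^{\ast}+T_{2}T_{2}^{\ast})D^{-1}&0\\0&0\end{smallmatrix}\right)=\left(\begin{smallmatrix}I_{Ran(T)}&0\\0&0\end{smallmatrix}\right)$ is the self-adjoint projection onto $Ran(T)$, which gives $(3)$ and, after right-multiplication by $T$, also $(1)$; that $XT=\left(\begin{smallmatrix}T_{1}^{\ast}D^{-1}T_{1}&T_{1}^{\ast}D^{-1}T_{2}\\T_{2}^{\ast}D^{-1}T_{1}&T_{2}^{\ast}D^{-1}T_{2}\end{smallmatrix}\right)$ is self-adjoint because $D^{-1}$ is, giving $(4)$; and $XTX=X(TX)=X$ since the second block column of $X$ is already zero, giving $(2)$. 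By uniqueness of the Moore--Penrose inverse, $T^{\dagger}=X$, i.e. \eqref{law2}.

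For \eqref{law3}--\eqref{law4} I would avoid repeating the argument and instead apply the part just proved to $S:=T^{\ast}$, which also has closed range, using the decomposition $F=F_{1}\oplus F_{2}$ of its domain and $E=Ran(T^{\ast})\oplus Ker(T)$ of its codomain. This yields $T^{\ast}=\left(\begin{smallmatrix}S_{1}&S_{2}\\0&0\end{smallmatrix}\right)$ and $(T^{\ast})^{\dagger}=\left(\begin{smallmatrix}S_{1}^{\ast}\widetilde{D}^{-1}&0\\S_{2}^{\ast}\widetilde{D}^{-1}&0\end{smallmatrix}\right)$ with $\widetilde{D}=S_{1}S_{1}^{\ast}+S_{2}S_{2}^{\ast}$ positive and invertible on $Ran(T^{\ast})$. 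Taking adjoints and using $(T^{\ast})^{\dagger}=(T^{\dagger})^{\ast}$ gives \eqref{law3} with $T_{1}=S_{1}^{\ast}$, $T_{2}=S_{2}^{\ast}$, so that $\mathfrak{D}=T_{1}^{\ast}T_{1}+T_{2}^{\ast}T_{2}=\widetilde{D}$, together with \eqref{law4}. The main obstacle is precisely the invertibility claim handled in the third paragraph; once \cite[Lemma 2.1]{SHA/PARTIAL} is invoked, the remainder is routine operator-matrix algebra and an appeal to uniqueness of the Moore--Penrose inverse.
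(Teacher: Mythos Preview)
Your argument is correct and tracks the paper's proof closely: the block form of $T$, the computation of $TT^{\ast}$, and the injectivity/surjectivity of $D$ via $Ker(TT^{\ast})=Ker(T^{\ast})$ and $Ran(TT^{\ast})=Ran(T)$ are exactly what the paper does. There are two minor differences worth noting. First, the paper obtains \eqref{law2} in one line by invoking the identity $T^{\dagger}=T^{\ast}(TT^{\ast})^{\dagger}$ from \cite[Corollary~2.4]{SHA/Gram} and observing that $(TT^{\ast})^{\dagger}=\left(\begin{smallmatrix}D^{-1}&0\\0&0\end{smallmatrix}\right)$, whereas you verify the four Moore--Penrose axioms for the candidate $X$ directly; your route is more self-contained but slightly longer. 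Second, the paper dismisses \eqref{law3}--\eqref{law4} as ``analogous,'' while you derive them from the first part by applying it to $T^{\ast}$ and using $(T^{\ast})^{\dagger}=(T^{\dagger})^{\ast}$; this duality trick is a clean way to avoid repetition and is a small improvement over the paper's presentation.
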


\begin{proof}
We prove only the matrix representations (\ref{law1}) and (\ref{law2}), the
proof of (\ref{law3}) and (\ref{law4}) are analogous.
The operator $T$ has the following representation:
\begin{equation*} \label{law5}
T=\left[\begin{array}{cc}
T_{1} & T_{2} \\
T_{3} & T_{4} \\
\end{array}\right] :\left[\begin{array}{c}
E_{1} \\
E_{2} \\
\end{array}\right] \to~\left[\begin{array}{c}
Ran(T) \\
Ker(T^{\ast}) \\
\end{array}\right],
\end{equation*}
which yields
\begin{equation*} \label{law6}
T^{\ast}=\left[\begin{array}{cc}
T_{1}^{\ast} & T_{3}^{\ast} \\
T_{2}^{\ast} & T_{4}^{\ast} \\
\end{array}\right] ~:\left[\begin{array}{c}
Ran(T) \\
Ker(T^{\ast}) \\
\end{array}\right]\to~\left[\begin{array}{c}
E_{1} \\
E_{2} \\
\end{array}\right].
\end{equation*}
From $ T^{\ast}(Ker(T^{\ast}))=\lbrace 0\rbrace $ we obtain $
T^{\ast}_{3}=0 $
and $ T^{\ast}_{4}=0 $. Then $ T_{3}=0 $ and $ T_{4}=0 $ which yield
the matrix form (\ref{law1}) of $T$. Consequently, the adjoint operator
$T^*$ has the matrix representation
\begin{equation*} \label{law66}
T^{\ast}=\left[\begin{array}{cc}
T_{1}^{\ast} & 0 \\
T_{2}^{\ast} & 0 \\
\end{array}\right] ~:\left[\begin{array}{c}
Ran(T) \\
Ker(T^{\ast}) \\
\end{array}\right]\to~\left[\begin{array}{c}
E_{1} \\
E_{2} \\
\end{array}\right].
\end{equation*}
We therefore have
\begin{equation} \label{law7}
 T \, T^{\ast}=\left[\begin{array}{cc}
D & 0 \\
0 & 0 \\
\end{array}\right]:\left[\begin{array}{c}
Ran(T) \\
Ker(T^*) \\
\end{array}\right]\to~\left[\begin{array}{c}
Ran(T) \\
Ker(T^{\ast}) \\
\end{array}\right].
\end{equation}
where $ D=T_{1}T_{1}^{\ast}+T_{2}T_{2}^{\ast}:Ran(T)\to~ Ran(T) $.
From $ Ker(TT^{\ast})=Ker(T^{\ast})$ it follows that $D$ is
injective. From $ Ran(TT^{\ast})=Ran(T)$ it follows that $D$ is surjective.
Hence, $D$ is invertible. Using \cite[Corollary 2.4]{SHA/Gram} and (\ref{law7}) we obtain
\begin{equation*} \label{law77}
T^{\dagger}=T^{\ast}(TT^{\ast})^{\dagger}=\left[\begin{array}{cc}
T_{1}^{\ast} & 0 \\
T_{2}^{\ast} & 0 \\
\end{array}\right] \left[\begin{array}{cc}
D^{-1} & 0 \\
0 & 0 \\
\end{array}\right]=\left[\begin{array}{cc}
T_{1}^{\ast}D^{-1} & 0 \\
T_{2}^{\ast}D^{-1} & 0 \\
\end{array}\right].
\end{equation*}
\end{proof}
%%%%%%%%%%%%%%%%%%%%%%%%%%%%%%%%%%%%%%%%%%%%%%%%%%%%%%%%%%%%%%%
\section{The reverse order law}
We begin our section with the following useful facts about
the product of module maps with closed range. Suppose $E, F$ and $G$
are Hilbert C*-modules and $ S\in \mathcal{L}(E,F) $ and
$ T \in \mathcal{L}(F,G) $ are bounded
adjointable operators with closed ranges. Then  $TS$ has
closed range if and only if $T^{\dagger}TSS^{\dagger}$ has, if and only if
$Ker(T)+Ran(S)$ is an orthogonal summand in $F$, if an only if
 $Ker(S^*)+Ran(T^*)$ is an orthogonal summand in $F$. For the proofs
of the results and historical notes about the problem we refer to
\cite{SHA/PRODUCT} and references therein.

\begin{theorem}\label{ahmadi3}
Suppose $E, F$ and $G$ are Hilbert C*-modules and $ S\in
\mathcal{L}(E,F)$, $T \in \mathcal{L}(F,G)$ and $TS \in \mathcal{L}(E,G)$
have closed ranges.
Then following statements are equivalent:
\begin{list}{(\roman{cou001})}{\usecounter{cou001}}
\item $ TS(TS)^{\dagger}=TSS^{\dagger}T^{\dagger},$
\item $T^{\ast}TS=SS^{\dagger}T^{\ast}TS,$
\item $ S^{\dagger}T^{\dagger}\in (TS) \{ 1,2,3 \}.$
\end{list}
\end{theorem}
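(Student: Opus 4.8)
The plan is to choose one decomposition of the middle module $F$ that simultaneously displays the structure of $S$, $S^{\dagger}$, $T$ and $T^{\dagger}$, and then to recognize all three statements (i)--(iii) as one and the same condition $T_{1}^{*}T_{2}=0$ on the block entries of $T$. Concretely, I would split $F=Ran(S)\oplus Ker(S^{*})$. Applying Lemma \ref{ahmadi1} to $S$ (with $E=Ran(S^{*})\oplus Ker(S)$) yields $S=\left[\begin{smallmatrix}S_{1}&0\\0&0\end{smallmatrix}\right]$ with $S_{1}$ invertible and $S^{\dagger}=\left[\begin{smallmatrix}S_{1}^{-1}&0\\0&0\end{smallmatrix}\right]$, so $SS^{\dagger}=\left[\begin{smallmatrix}I&0\\0&0\end{smallmatrix}\right]$ is the projection of $F$ onto $Ran(S)$. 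Applying Lemma \ref{ahmadi2} in the form (\ref{law1})--(\ref{law2}) to $T$, with $E_{1}=Ran(S)$, $E_{2}=Ker(S^{*})$ and codomain $G=Ran(T)\oplus Ker(T^{*})$, yields $T=\left[\begin{smallmatrix}T_{1}&T_{2}\\0&0\end{smallmatrix}\right]$ and $T^{\dagger}=\left[\begin{smallmatrix}T_{1}^{*}D^{-1}&0\\T_{2}^{*}D^{-1}&0\end{smallmatrix}\right]$, where $D=T_{1}T_{1}^{*}+T_{2}T_{2}^{*}\in\mathcal{L}(Ran(T))$ is positive and invertible.

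Next I would simply multiply out the operators occurring in (i)--(iii). One gets $TS=\left[\begin{smallmatrix}T_{1}S_{1}&0\\0&0\end{smallmatrix}\right]$, $S^{\dagger}T^{\dagger}=\left[\begin{smallmatrix}S_{1}^{-1}T_{1}^{*}D^{-1}&0\\0&0\end{smallmatrix}\right]$, hence $TSS^{\dagger}T^{\dagger}=\left[\begin{smallmatrix}T_{1}T_{1}^{*}D^{-1}&0\\0&0\end{smallmatrix}\right]$; likewise $T^{*}TS=\left[\begin{smallmatrix}T_{1}^{*}T_{1}S_{1}&0\\T_{2}^{*}T_{1}S_{1}&0\end{smallmatrix}\right]$ and $SS^{\dagger}T^{*}TS=\left[\begin{smallmatrix}T_{1}^{*}T_{1}S_{1}&0\\0&0\end{smallmatrix}\right]$, so that (ii) is exactly $T_{2}^{*}T_{1}=0$ (using invertibility of $S_{1}$). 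Since $TS$ has closed range, $Ran(T_{1})$ is orthogonally complemented in $Ran(T)$, and $TS(TS)^{\dagger}$ is, in these coordinates, $\left[\begin{smallmatrix}P&0\\0&0\end{smallmatrix}\right]$, where $P$ is the orthogonal projection of $Ran(T)$ onto $Ran(T_{1})$ (note $Ran(TS)=Ran(T_{1})$, as $S_{1}$ is onto); hence (i) reads $T_{1}T_{1}^{*}D^{-1}=P$. Right-multiplying by $D=T_{1}T_{1}^{*}+T_{2}T_{2}^{*}$ and using $PT_{1}T_{1}^{*}=T_{1}T_{1}^{*}$, this becomes $PT_{2}T_{2}^{*}=0$, and taking adjoints this says $T_{2}^{*}$ annihilates $Ran(T_{1})$, i.e. $T_{2}^{*}T_{1}=0$. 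All these equivalences are reversible, so (i) $\iff$ (ii).

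It remains to fit (iii) into the chain. Writing $X=S^{\dagger}T^{\dagger}$, the defining equations of $(TS)\{1,2,3\}$ translate, in the coordinates above, into: $T_{1}T_{1}^{*}D^{-1}$ is self-adjoint (equation $(3)$); $T_{1}T_{1}^{*}D^{-1}T_{1}=T_{1}$ (equation $(1)$); and $T_{1}^{*}D^{-1}\bigl(T_{1}T_{1}^{*}D^{-1}\bigr)=T_{1}^{*}D^{-1}$ (equation $(2)$). If $T_{1}^{*}T_{2}=0$, then $T_{2}^{*}$ vanishes on $Ran(T_{1})$ and $T_{1}^{*}$ vanishes on $Ran(T_{1})^{\perp}=Ker(T_{1}^{*})$, so $D$ leaves each of $Ran(T_{1})$ and $Ran(T_{1})^{\perp}$ invariant; as $D$ is invertible it decomposes as $D_{1}\oplus D_{2}$ with $D_{1}=T_{1}T_{1}^{*}|_{Ran(T_{1})}$ invertible, whence $T_{1}T_{1}^{*}D^{-1}=P$ and equations $(1)$--$(3)$ are immediate. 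Conversely, equations $(1)$ and $(3)$ alone force $T_{1}T_{1}^{*}D^{-1}$ to be a self-adjoint idempotent whose range equals $Ran(T_{1})$, hence $T_{1}T_{1}^{*}D^{-1}=P$, which by the previous paragraph is equivalent to $T_{1}^{*}T_{2}=0$. Thus (iii) is equivalent to $T_{1}^{*}T_{2}=0$ as well, and the three statements coincide.

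The step I expect to carry the real weight is the bookkeeping around $D^{-1}$: showing that under $T_{1}^{*}T_{2}=0$ the positive invertible operator $D$ block-diagonalizes along $Ran(T)=Ran(T_{1})\oplus Ran(T_{1})^{\perp}$, so that $T_{1}T_{1}^{*}D^{-1}$ collapses to the projection $P$. This uses $T_{2}^{*}T_{1}=0$ (to kill $T_{2}T_{2}^{*}$ on $Ran(T_{1})$), $T_{1}^{*}T_{2}=0$ (to keep the image of $T_{2}T_{2}^{*}$ inside $Ran(T_{1})^{\perp}$), and the invertibility of $D$; no Hilbert-space-only fact (Pythagoras, self-duality) is invoked, so the argument stays valid over an arbitrary C*-algebra. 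It is also worth noting that the projections $SS^{\dagger}$, $P=TS(TS)^{\dagger}$ and the complementation of $Ran(T_{1})$ in $Ran(T)$ exist precisely because $S$, $TS$ and $T$ have closed range --- this is exactly where the third closed-range hypothesis of the theorem is spent.
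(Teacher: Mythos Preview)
Your proposal is correct and follows essentially the same approach as the paper: the same block decompositions via Lemmas~\ref{ahmadi1} and~\ref{ahmadi2}, and the same reduction of all three statements to the single condition $T_{2}^{*}T_{1}=0$. The only cosmetic difference is that you phrase (i) through the orthogonal projection $P$ onto $Ran(T_{1})=Ran(TS)$ and block-diagonalize $D$ for (iii), whereas the paper keeps $T_{1}S_{1}(T_{1}S_{1})^{\dagger}$ and argues the cycle $(3)\Rightarrow(2)\Leftrightarrow(1)\Rightarrow(3)$ purely algebraically; both routes are equally short.
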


\begin{proof}
Using Lemma \ref{ahmadi1}, the operator $S$ and its Moore-Penrose inverse $S^{ \dag}$
have the following matrix forms:
\begin{eqnarray*}
S =  \left[\begin{array}{cc}
S_{1} & 0 \\
0 & 0 \\
\end{array}\right] & : & \left[\begin{array}{c}
Ran(S^{\ast}) \\
Ker(S) \\
\end{array}\right] \to~\left[\begin{array}{c}
Ran(S) \\
Ker(S^{\ast}) \\
\end{array}\right],\\
%%%%%%%%%%%%%%%%%%%%%%%%%%%%%%%%%%%%%%%%%%%%%
 S^{\dagger}  =  \left[\begin{array}{cc}
S_{1}^{-1} & 0 \\
0 & 0 \\
\end{array}\right] & : & \left[\begin{array}{c}
Ran(S) \\
Ker(S^{\ast}) \\
\end{array}\right] \to~\left[\begin{array}{c}
Ran(S^{\ast}) \\
Ker(S) \\
\end{array}\right].
\end{eqnarray*}
From Lemma \ref{ahmadi2} it follows that the
operator $T$ and $T^{ \dag}$ have the following matrix forms:
\begin{eqnarray*}
T & = & \left[\begin{array}{cc}
T_{1} & T_{2} \\
0 & 0 \\
\end{array}\right] : \left[\begin{array}{c}
Ran(S) \\
Ker(S^{\ast}) \\
\end{array}\right] \to~\left[\begin{array}{c}
Ran(T) \\
Ker(T^{\ast}) \\
\end{array}\right], \\
T^{\dagger} & = & \left[\begin{array}{cc}
T_{1}^{\ast}D^{-1} & 0 \\
T_{2}^{\ast}D^{-1} & 0 \\
\end{array}\right],
\end{eqnarray*}
where $ D=T_{1}T_{1}^{\ast}+T_{2}T_{2}^{\ast}$ is invertible and positive
in $ \mathcal{L}(Ran(T)) $. Then we have the following products
$$ TS=\left[\begin{array}{cc}
T_{1}S_{1} & 0 \\
0 & 0 \\
\end{array}\right], ~~
(TS)^{\dagger}=\left[\begin{array}{cc}
(T_{1}S_{1})^{\dagger} & 0 \\
0 & 0 \\
\end{array}\right],~~~
S^{\dagger}T^{\dagger}=\left[\begin{array}{cc}
S_{1}^{-1}T_{1}^{\ast}D^{-1} & 0 \\
0 & 0 \\
\end{array}\right].$$
%%%%%%%%%%%%%%%%%%%%%%%%%%%%%%%%%
It is easy to check that the following three expressions
in terms of $T_1$, $T_2$ and $S_1$ are equivalent to our statements.
\begin{enumerate}
\item $T_{1}S_{1}(T_{1}S_{1})^{\dagger}=T_{1}T_{1}^{\ast}D^{-1}$, which is equivalent to (i).
\item $T_{2}^{\ast}T_{1}=0$, which is equivalent to (ii).
\item $T_{1}T_{1}^{\ast}D^{-1}T_{1}=T_{1}$ and
      $[T_{1}T_{1}^{\ast},D^{-1}]=0 $, which are equivalent to (iii).
\end{enumerate}
Note that $ [T_{1}T_{1}^{\ast}, D^{-1}]=0$, since $ T_{1}S_{1}(T_{1}S_{1})^{\dagger}$ is selfadjoint.
We show that $(3) \Rightarrow (2) \Leftrightarrow (1) \Rightarrow (3)$.

To prove $(1) \Leftrightarrow (2)$, we observe that
$ T_{1}S_{1}(T_{1}S_{1})^{\dagger}=T_{1}T_{1}^{\ast}D^{-1}$ if and only if
$(T_{1}S_{1})^{\dagger}=(T_{1}S_{1})^{\dagger}
T_{1}T_{1}^{\ast}D^{-1}.$ The last statement is obtained by multiplying the first expression by
$ (T_{1}S_{1})^{\dagger} $ from the left side, or multiplying the
second expression by $ T_{1}S_{1} $ from the left side, and using
  $ T_{1}T_{1}^{\ast}=T_{1}S_{1}S_{1}^{-1}T_{1}^{\ast}$.
We therefore have
\begin{eqnarray*}
(T_{1}S_{1})^{\dagger}=(T_{1}S_{1})^{\dagger}T_{1}T_{1}^{\ast} D^{-1}
 & \Leftrightarrow& (T_{1}S_{1})^{\dagger}(T_{1}T_{1}^{\ast}+T_{2}
                     T_{2}^{\ast})=(T_{1}S_{1})^{\dagger}T_{1}T_{1}^{\ast} \\
 & \Leftrightarrow& (T_{1}S_{1})^{\dagger}T_{2}T_{2}^{\ast}=0 \\
 & \Leftrightarrow& Ran(T_{2}T_{2}^{\ast})\subseteq Ker((T_{1}S_{1})^{\dagger})=Ker((T_{1}S_{1})^{\ast}) \\
 & \Leftrightarrow& S_{1}^{\ast}T_{1}^{\ast}T_{2}T_{2}^{*}=0 ~~\Leftrightarrow ~~ T_{2}T_{2}^{*}T_{1}=0\\
 & \Leftrightarrow& Ran(T_1) \subseteq Ker(T_{2}T_{2}^{*})=Ker (T_{2}^{*})\\
 & \Leftrightarrow& T_{2}^{*}T_1=0.
\end{eqnarray*}

To demonstrate $(1) \Rightarrow (3)$, we multiply $ T_{1}S_{1}(T_{1}S_{1})
^{\dagger}=T_{1}T_{1}^{\ast}D^{-1} $ by $ T_{1}S_{1} $ from the right side,
we find $ T_{1}T_{1}^{\ast}D^{-1}T_{1}=T_{1}$, i.e. (3) holds.

Finally, we prove $(3) \Rightarrow (2)$. If $T_{1}T_{1}^{\ast}D^{-1}T_{1}
=T_{1}$ and $ [T_{1}T_{1}^{\ast} , D^{-1}]=0 $, then $ T_{1}T_{1}^{\ast}T_{1}=DT_{1}=T_{1}T_{1}^{\ast}T_{1}+T_{2}T_{2}^{\ast}
T_{1}$. Consequently,   $T_{2}T_{2}^{\ast}T_{1}=0$ which implies
$T_{2}T_{1}^{\ast}=0 $, since $ Ran(T_{1})\subseteq Ker(T_{2}T_{2}^{\ast})=Ker(T_{2}^{\ast})$.
%%%%%%%%%%%%%%%%%%%%%%%%%%%%%%%%%%%%%%%%%%%%%%%%%%%%%%%%%%%
\end{proof}

\begin{theorem}\label{ahmadi4}
Suppose $E, F$ and $G$ are Hilbert C*-modules and $ S\in
\mathcal{L}(E,F) $, $T \in \mathcal{L}(F,G)$ and $TS \in \mathcal{L}(E,G)$
have closed ranges.
Then following statements are equivalent:
\begin{list}{(\roman{cou001})}{\usecounter{cou001}}
\item $(TS)^{\dagger}TS=S^{\dagger}T^{\dagger}TS,$
\item $TSS^{\ast}=TSS^{\ast}T^{\dagger}T,$
%\item $Ran(SS^{\ast}T^{\ast}) \subseteq Ran(T^{\ast}).$
\item $S^{\dagger}T^{\dagger}  \in (TS)\lbrace 1,2,4 \rbrace.$
\end{list}
\end{theorem}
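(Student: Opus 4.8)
The plan is to deduce Theorem~\ref{ahmadi4} directly from Theorem~\ref{ahmadi3} by passing to adjoints, so that no fresh matrix computation is needed. The facts I intend to use are standard: a bounded adjointable operator has closed range if and only if its adjoint does (so that $S^{\ast}$, $T^{\ast}$ and $(TS)^{\ast}=S^{\ast}T^{\ast}$ all have closed ranges); $(A^{\ast})^{\dagger}=(A^{\dagger})^{\ast}$ for every Moore--Penrose invertible $A$; $(AB)^{\ast}=B^{\ast}A^{\ast}$; and the elementary equivalence $X\in A\{1,2,3\}\iff X^{\ast}\in A^{\ast}\{1,2,4\}$, which follows by taking adjoints in the four Penrose equations, conditions $(1)$ and $(2)$ being self-dual while $(3)$ for $(A,X)$ is precisely $(4)$ for $(A^{\ast},X^{\ast})$ (since $X^{\ast}A^{\ast}=(AX)^{\ast}$ is selfadjoint exactly when $AX$ is).

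Concretely, apply Theorem~\ref{ahmadi3} with $S^{\ast}\in\mathcal{L}(G,F)$ in the role of its ``$T$'' and $T^{\ast}\in\mathcal{L}(F,E)$ in the role of its ``$S$'', so that the product there is $S^{\ast}T^{\ast}=(TS)^{\ast}$. Statement (i) of Theorem~\ref{ahmadi3} for this pair reads $(TS)^{\ast}\bigl((TS)^{\ast}\bigr)^{\dagger}=(TS)^{\ast}(T^{\ast})^{\dagger}(S^{\ast})^{\dagger}$; substituting $\bigl((TS)^{\ast}\bigr)^{\dagger}=\bigl((TS)^{\dagger}\bigr)^{\ast}$, $(T^{\ast})^{\dagger}=(T^{\dagger})^{\ast}$, $(S^{\ast})^{\dagger}=(S^{\dagger})^{\ast}$ and taking adjoints turns it into $(TS)^{\dagger}TS=S^{\dagger}T^{\dagger}TS$, which is item (i) of Theorem~\ref{ahmadi4}. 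Likewise, item (ii) of Theorem~\ref{ahmadi3} for the pair, namely $SS^{\ast}T^{\ast}=T^{\ast}(T^{\ast})^{\dagger}SS^{\ast}T^{\ast}$, becomes $TSS^{\ast}=TSS^{\ast}T^{\dagger}T$ upon taking adjoints, which is item (ii) of Theorem~\ref{ahmadi4}.

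For the third condition, item (iii) of Theorem~\ref{ahmadi3} for the pair says $(T^{\ast})^{\dagger}(S^{\ast})^{\dagger}\in\bigl((TS)^{\ast}\bigr)\{1,2,3\}$; rewriting the operator on the left as $(T^{\dagger})^{\ast}(S^{\dagger})^{\ast}=(S^{\dagger}T^{\dagger})^{\ast}$ and invoking the equivalence $X\in A\{1,2,3\}\iff X^{\ast}\in A^{\ast}\{1,2,4\}$ with $X=(S^{\dagger}T^{\dagger})^{\ast}$ and $A=(TS)^{\ast}$ yields $S^{\dagger}T^{\dagger}\in(TS)\{1,2,4\}$, which is item (iii) of Theorem~\ref{ahmadi4}. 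Since every translation above is an equivalence (each step being reversible), and Theorem~\ref{ahmadi3} asserts the equivalence of its three items, the three items of Theorem~\ref{ahmadi4} are equivalent. I do not expect a genuine obstacle; the only delicate point is the bookkeeping of adjoints and, in particular, verifying that Penrose condition $(3)$ dualizes to condition $(4)$ so that ``$\{1,2,3\}$''-membership is exactly the adjoint of ``$\{1,2,4\}$''-membership. Should a self-contained argument be preferred instead, one can simply reuse the matrix forms of $S$, $S^{\dagger}$, $T$, $T^{\dagger}$ from the proof of Theorem~\ref{ahmadi3} to compute $(TS)^{\dagger}TS$, $S^{\dagger}T^{\dagger}TS$, $TSS^{\ast}$, etc., and then chase the implications $(3)\Rightarrow(2)\Leftrightarrow(1)\Rightarrow(3)$ as before.
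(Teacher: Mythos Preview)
Your duality argument is correct and is a genuinely different, cleaner route than the paper's own proof. The paper does not invoke Theorem~\ref{ahmadi3} at all; instead it redoes the matrix computation in the same block decomposition used for Theorem~\ref{ahmadi3}, translates items (i)--(iii) into conditions on $T_{1},T_{2},S_{1}$, and then chases the cycle $(1)\Rightarrow(3)\Rightarrow(2)\Rightarrow(1)$ by hand. Your approach avoids that repetition entirely: once one observes that $(A^{\ast})^{\dagger}=(A^{\dagger})^{\ast}$ and that Penrose condition $(3)$ dualizes to condition $(4)$, Theorem~\ref{ahmadi4} is exactly Theorem~\ref{ahmadi3} for the pair $(S^{\ast},T^{\ast})$ read through adjoints. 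The only blemish is a bookkeeping slip in the domains: you write $S^{\ast}\in\mathcal{L}(G,F)$ and $T^{\ast}\in\mathcal{L}(F,E)$, whereas in fact $S^{\ast}\in\mathcal{L}(F,E)$ and $T^{\ast}\in\mathcal{L}(G,F)$; this does not affect the argument since the product $S^{\ast}T^{\ast}=(TS)^{\ast}$ and the module hypotheses of Theorem~\ref{ahmadi3} still line up correctly. What your approach buys is economy and a transparent explanation of why $\{1,2,3\}$ in Theorem~\ref{ahmadi3} becomes $\{1,2,4\}$ here; what the paper's direct computation buys is self-containment (no reliance on the $\ast$--$\dagger$ commutation or the dualization of the Penrose conditions) and an explicit block-matrix description of each item, which some readers may prefer.
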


\begin{proof}
The operators $T$, $S$ and $TS$ and their Moore-Penrose inverses
have the same matrix representations as in the previous theorem.
To prove the assertions, we first find the equivalent expressions for our statements
in terms of $T_1$, $T_2$ and $S_1$.
\begin{enumerate}
\item $(T_{1}S_{1})^{\dagger}T_{1}S_{1}=S_{1}^{-1}T_{1}^{\ast}D^{-1}T_{1}S_{1}$,
      which is equivalent to (i).
\item $T_{1}S_{1}S_{1}^{\ast}T_{1}^{\ast}D^{-1}T_{1}=T_{1}S_{1}S_{1}^{\ast}$ and
      $T_{1}S_{1}S_{1}^{\ast}T_{1}^{\ast}D^{-1}T_{2}=0 $,  which are equivalent to (ii).
\item $T_{1}T_{1}^{\ast}D^{-1}T_{1}=T_{1}$ and $ [S_{1}S_{1}^{\ast}, T_{1}^{\ast}D^{-1}T_{1}]=0$,
        which are equivalent to (iii).
\end{enumerate}
 Note that $ [S_{1}S_{1}^{\ast},T_{1}^{\ast}D^{-1}T_{1}]=0$, since
 $(T_{1}S_{1})^{\dagger}T_{1}S_{1} $ is selfadjoint. We show that
 $(1)\Rightarrow (3)\Rightarrow (2)\Rightarrow (1)$.

Suppose (1) holds. If we multiply $(T_1S_1)^{ \dag}T_1S_1=S_1^{-1}T_1^{*}D^{-1}T_1S_1$
by $T_1S_1$ from the left side, we obtain $T_1=T_1T_1^{*}D^{-1}T_1$. Furthermore,
$[S_1S_1^{*}, T_1^{*}D^{-1}T_1]=0$, i.e. (3) holds.

Suppose (3) holds. Obviously, $T_{1}S_{1}S_{1}^{\ast}T_{1}^{\ast}
D^{-1}T_{1}=T_{1}T_{1}^{\ast}D^{-1}T_{1}S_{1}S_{1}^{\ast}=T_{1}S_{1}S_{1}^{\ast}$,
that is, the first equality of (2) holds. According to the fact that
$(T_{1} T_{1}^{*}+T_2 T_{2}^{*})D^{-1}T_1=T_1$ and the assumption
$T_{1}T_{1}^{\ast}D^{-1}T_{1}=T_{1}$, we have $T_{2}^{\ast}D^{-1}T_{1}=0$. Consequently,
$$Ran(D^{-1}T_1) \subseteq Ker(T_2 T_{2}^{*})=Ker(T_{2}^{*}),$$
which yields $T_{2}^{\ast}D^{-1}T_{1}=0$. Therefore, $T_{1}^{\ast}D^{-1}T_{2}=0$
which establishes the second equality of (2).
%%%%%%%%%%%%%%%%%%%%%%%%%%%%%%%%%%%%%%%%%%%%%%%%%%%%%%%%%%%%%%%%%%
%We claim that $T_{1}^{\ast}D^{-1}T_{2}=0$ if and only if $T_{1}T_{1}^{\ast}
%D^{-1}T_{1}=T_{1}$. Let $T_{1}^{\ast}D^{-1}T_{2}=0$, then $T_{2}^{\ast}D^{-1}T_{1}=0$
%and so $T_{2}T_{2}^{\ast}D^{-1}T_{1}=0$. Since $(T_{1} T_{1}^{*}+T_2 T_{2}^{*})D^{-1}=I$,
%we have $ T_{1} T_{1}^{*}D^{-1}T_1+T_2 T_{2}^{*} D^{-1}T_1=T_1$ which yields $T_{1}T_{1}^{\ast}
%D^{-1}T_{1}=T_{1}$. Conversely, suppose $T_{1}T_{1}^{\ast}
%D^{-1}T_{1}=T_{1}$. Then $T_{2}T_{2}^{\ast}D^{-1}T_{1}=0$ which implies
%$$Ran(D^{-1}T_1) \subseteq Ker(T_2 T_{2}^{*})=Ker(T_{2}^{*}).$$ Hence,
%$T_{2}^{\ast}D^{-1}T_{1}=0$ and consequently $T_{1}^{\ast}D^{-1}T_{2}=0$. The second
%equality of (2) follows from our assumptions and the preceding argument.
 %fact and $[S_1S_1^{*}, T_1^{*}D^{-1}T_1]=0$.
 %%%%%%%%%%%%%%%%%%%%%%%%%%%%%%%%%%%%%%%%%%%%%%%%%%%%%%%%%%%%%%%%%%

In order to prove $(2) \Rightarrow (1)$, we multiply $ T_{1}S_{1}
S_{1}^{\ast}T_{1}^{\ast}D^{-1}T_{1}=T_{1}S_{1}
S_{1}^{\ast}$ by $(T_{1}S_{1})^{\dagger}$ from the left side. In view of
$[S_1S_1^{*}, T_1^{*}D^{-1}T_1]=0$, we find
\begin{eqnarray*}
S_{1}^{\ast}T_{1}^{\ast}D^{-1}T_{1}=
(T_{1}S_{1})^{\dagger}T_{1}S_{1}S_{1}^{\ast} & \Rightarrow &
(T_{1}S_{1})^{\dagger}
T_{1}S_{1}=S_{1}^{\ast}T_{1}^{\ast}D^{-1}T_{1}(S_{1}^{\ast})^{-1} \\
& \Leftrightarrow & (T_{1}S_{1})^{\dagger}T_{1}S_{1}=S_{1}^{-1}
T_{1}^{\ast}D^{-1}T_{1}S_{1}.
\end{eqnarray*}
\end{proof}
%%%%%%%%%%%%%%%%%%%%%%%%%%%%%%%%%%%%%%%%%%%%%%%%%%%%%%%%%%%%%%%
Now we are ready to derive Greville's result, which also gives an answer to a
problem of \cite{SHA/PRODUCT} about the reverse order law for Moore-Penrose
inverses of modular operators.
The operators $SS^{\dagger}$ and $T^{\dagger}T$ are orthogonal projections onto
$Ran(S)$ and $Ran(T^{ \dag})=Ran(T^*)$, respectively. These facts together
with Theorems \ref{ahmadi3} and \ref{ahmadi4} lead us
to the following result.

\begin{corollary}\label{ahmadi5}
Suppose $E, F$ and $G$ are Hilbert C*-modules and $ S\in
\mathcal{L}(E,F) $, $T \in \mathcal{L}(F,G)$ and $TS \in \mathcal{L}(E,G)$
have closed ranges.
Then following statements are equivalent:
\begin{list}{(\roman{cou001})}{\usecounter{cou001}}
\item $(TS)^{\dagger}=S^{ \dag} T^{ \dag},$
\item $ TS(TS)^{\dagger}=TSS^{\dagger}T^{\dagger}$ and $(TS)^{\dagger}TS=S^{\dagger}T^{\dagger}TS,$
\item $SS^{\dagger}T^{\ast}TS=T^{\ast}TS$ and $TSS^{\ast}T^{\dagger}T=TSS^{\ast},$
\item $Ran(T^{\ast}TS) \subseteq Ran(S)$ and $Ran(SS^{\ast}T^{\ast}) \subseteq Ran(T^{\ast}).$
\end{list}
\end{corollary}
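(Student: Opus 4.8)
The plan is to bootstrap the four-way equivalence from Theorems \ref{ahmadi3} and \ref{ahmadi4}, using only the elementary observation that for an orthogonal projection $P$ onto a closed submodule $V$ one has $PX=X$ if and only if $Ran(X)\subseteq V$. First I would settle $(i)\Leftrightarrow(ii)$. The implication $(i)\Rightarrow(ii)$ is immediate: substituting $(TS)^{\dagger}=S^{\dagger}T^{\dagger}$ into $TS(TS)^{\dagger}$ and into $(TS)^{\dagger}TS$ yields the two displayed equalities at once. For $(ii)\Rightarrow(i)$, note that the first equality of (ii) is verbatim statement (i) of Theorem \ref{ahmadi3}, hence by that theorem equivalent to $S^{\dagger}T^{\dagger}\in(TS)\{1,2,3\}$, while the second equality of (ii) is verbatim statement (i) of Theorem \ref{ahmadi4}, hence equivalent to $S^{\dagger}T^{\dagger}\in(TS)\{1,2,4\}$. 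Together these say that $S^{\dagger}T^{\dagger}$ satisfies all four Moore--Penrose equations for $TS$, so by uniqueness $S^{\dagger}T^{\dagger}=(TS)^{\dagger}$, which is (i).

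Next I would prove $(ii)\Leftrightarrow(iii)$ termwise. By the equivalence $(i)\Leftrightarrow(ii)$ of Theorem \ref{ahmadi3}, the identity $TS(TS)^{\dagger}=TSS^{\dagger}T^{\dagger}$ holds precisely when $T^{*}TS=SS^{\dagger}T^{*}TS$, i.e. exactly the first equation of (iii). By the equivalence $(i)\Leftrightarrow(ii)$ of Theorem \ref{ahmadi4}, the identity $(TS)^{\dagger}TS=S^{\dagger}T^{\dagger}TS$ holds precisely when $TSS^{*}=TSS^{*}T^{\dagger}T$, i.e. exactly the second equation of (iii). Conjoining these two equivalences gives $(ii)\Leftrightarrow(iii)$.

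Finally, for $(iii)\Leftrightarrow(iv)$ I would invoke that $SS^{\dagger}$ is the orthogonal projection onto $Ran(S)$ and that $T^{\dagger}T$ is the orthogonal projection onto $Ran(T^{\dagger})=Ran(T^{*})$. Thus $SS^{\dagger}T^{*}TS=T^{*}TS$ is equivalent to $Ran(T^{*}TS)\subseteq Ran(S)$. For the second equation one passes to adjoints: $TSS^{*}T^{\dagger}T=TSS^{*}$ is equivalent to $T^{\dagger}T\,(TSS^{*})^{*}=(TSS^{*})^{*}$, that is, to $T^{\dagger}T\,SS^{*}T^{*}=SS^{*}T^{*}$, which says exactly $Ran(SS^{*}T^{*})\subseteq Ran(T^{*})$. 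This closes the chain $(i)\Leftrightarrow(ii)\Leftrightarrow(iii)\Leftrightarrow(iv)$. The only points demanding attention are bookkeeping ones — keeping track of which side each projection acts on, and the adjoint manipulation in $(iii)\Leftrightarrow(iv)$ — while the single structural step, $(ii)\Rightarrow(i)$, rests on combining the $\{1,2,3\}$-characterization of Theorem \ref{ahmadi3} with the $\{1,2,4\}$-characterization of Theorem \ref{ahmadi4} and then appealing to uniqueness of the Moore--Penrose inverse; given those two theorems this presents no genuine obstacle.
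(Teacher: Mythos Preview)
Your proposal is correct and follows essentially the same approach the paper intends: deduce $(ii)\Leftrightarrow(iii)$ termwise from the $(i)\Leftrightarrow(ii)$ equivalences in Theorems \ref{ahmadi3} and \ref{ahmadi4}, obtain $(ii)\Rightarrow(i)$ by combining the $\{1,2,3\}$- and $\{1,2,4\}$-inverse characterizations from those theorems with uniqueness of the Moore--Penrose inverse, and read off $(iii)\Leftrightarrow(iv)$ from the fact that $SS^{\dagger}$ and $T^{\dagger}T$ are the orthogonal projections onto $Ran(S)$ and $Ran(T^{*})$. The paper states exactly these ingredients in the sentence preceding the corollary and leaves the routine verification to the reader; your write-up simply spells it out.
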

%%%%%%%%%%%%%%%%%%%%%%%%%%%%%%%%%%%%%%%%%%%%%%%%%%%%%%%%%%%%%%%

{\bf Acknowledgement}: The author is grateful to the referee for
his/her careful reading and his/her useful comments.

%---------------------------------------------------------------------------------------%

\end{document}